\newtheorem{theorem}{Theorem}[section]
\newtheorem{corollary}[theorem]{Corollary}
\newtheorem{lemma}[theorem]{Lemma}
\newtheorem{conj}[theorem]{Conjecture}
\newtheorem{remark}[theorem]{Remark}
\newtheorem{proposition}[theorem]{Proposition}
\def\a{\alpha}
\def\v{\widetilde v_\lambda}
\def\R{\mathbb R}
\def\e{\varepsilon}
\def\c{{\rm Cap}}
\begin{document}

\title{Brunn-Minkowski inequality for the $1$-Riesz capacity \\ 
and level set convexity for the $1/2$-Laplacian}
 
 \author{M. Novaga
 \thanks{Dipartmento di Matematica,
 Universit\`a di Pisa, Largo Bruno Pontecorvo 5, 56127 Pisa, Italy 
 \newline email: novaga@dm.unipi.it},
 B. Ruffini
 \thanks{Scuola Normale Superiore, Piazza dei Cavalieri 7, 56126 Pisa, Italy
 \newline email: berardo.ruffini@sns.it}}


\maketitle

\begin{abstract}
\noindent We prove that that the $1$-Riesz capacity satisfies a Brunn-Minkowski inequality,
and that the capacitary function of the $1/2$-Laplacian is level set convex.
\end{abstract}

\noindent{\small {\bf Keywords:} fractional Laplacian; Brunn-Minkowski inequality; level set convexity; Riesz
capacity.}

%
\section{Introduction}

In this paper we consider the following problem
\begin{equation}\label{problemaprincipale}
\left\{ \begin{array}{ll}
 (-\Delta)^{s}u=0\qquad &\mbox{on}\,\,\,\R^N\setminus K\\
 u=1\qquad &\mbox{on}\,\,\,K\\
 \lim_{|x|\to+\infty}u(x)=0
 \end{array}\right.
\end{equation}
where $N\ge 2$, $s\in (0,N/2)$, and $(-\Delta)^{s}$ stands for the 
$s$-fractional Laplacian, defined as the unique pseudo-differential operator $(-\Delta)^s:\mathcal S\mapsto
L^2(\R^N)$, being $\mathcal{S}$ the Schwartz space of functions with fast decay to $0$ at infinity, such that 
\[
 \mathcal F(-\Delta)^{s}f=|\xi|^{2s}\mathcal F (f)(\xi),
\]
where $\mathcal{F}$ denotes the Fourier transform.
We refer to the guide \cite[Section $3$]{dinpalval} for more details on the subject.
A quantity strictly related to Problem \eqref{problemaprincipale} is the so-called {\em Riesz potential energy} 
of a set $E$, defined as
\begin{equation}\label{rieszpotentialenergy}
 I_\a(E)=\inf_{\mu(E)=1}\int_{\R^N\times\R^N}\frac{d\mu(x)\,d\mu(y)}{|x-y|^{N-\a}}
\qquad \a\in (0,N). 
\end{equation}
It is possible to prove (see \cite{landkof}) that  if $E$ is a compact set, then 
the infimum in the
definition of $\mathcal I_\alpha(E)$
is achieved by a Radon measure $\mu$  supported on the boundary of $E$ if $\alpha\le N-2$, and with support equal
to the whole $E$ if $\alpha\in(N-2,N)$. If $\mu$ is the optimal measure for the set $E$, we define
the {\em Riesz potential} of $E$ as
\begin{equation}\label{rieszpotential}
 v(x)=\int_{\R^N}\frac{d\mu(y)}{|x-y|^{N-\a}},
\end{equation}
so that
\[ 
I_\a(E)=\int_{\R^N}v(x)d\mu(x).
\]
It is not difficult to check (see \cite{landkof,golnovruf}) that the potential $v$ satisfies 
\[
(-\Delta)^{\frac{\a}{2}}v=c(\a,N)\,\mu,
\]
where $c(\a,N)$ is a positive constant, and that $v=I_\a(E)$ on $E$. 
In particular, if $s=\alpha/2$, then $v_K=v/I_{2s}(K)$ is the unique 
solution of Problem \eqref{problemaprincipale}.

Following \cite{landkof}, we define the {\em $\alpha$-Riesz capacity} of a set $E$ as
\begin{equation}\label{rieszcapacity}
 \c_\a(E):=\frac{1}{I_\a(E)}.
\end{equation}
We point out that 
this is not the only concept of capacity present in literature. 
Indeed, another one is given by the $2$-capacity of a set $E$, defined by
\begin{equation}\label{pcapacity}
\mathcal{C}_2(E)=\min\left\{\int_{\R^N}|\nabla \varphi|^2:\varphi\in
C^1(\R^N,[0,1]),\,\,\varphi\ge\chi_E\right\}
\end{equation}
where $\chi_A$ is the characteristic function of the set $A$. 
It is possible to prove that, if $E$ is a
compact set, then the minimum in \eqref{pcapacity} is achieved by a function $u$ satisfying
\begin{equation}\label{pcapacityeq}
\left\{ \begin{array}{ll}
  \Delta u=0\qquad &\mbox{on}\,\,\,\R^N\setminus E\\
  u=1\qquad &\mbox{on}\,\,\,E\\
  \lim_{|x|\to+\infty}u(x)=0.
 \end{array}\right.
\end{equation}
It is worth stressing that the $2$-capacity and the $\a$-Riesz capacity 
share several properties, and coincide if $\alpha=2$. 
We refer the reader to \cite[Chapter $8$]{ll} for a discussion of this topic. 

In a series of works (see for instance \cite{cs, Cs,K} and the monography \cite{k}) it has been proved
that the
solutions of
\eqref{pcapacityeq} are level set convex provided $E$ is a convex body, that is,
a compact convex set with non-empty interior.
Moreover, in \cite{borell} (and later in \cite{colsal} in a more general setting and in \cite{colcuo} for the
logarithmic capacity in $2$ dimensions) it has been proved that the
$2$-capacity satisfies a
suitable version of the Brunn-Minkowski inequality: given two convex bodies $K_0$ and $K_1$ in $\R^N$, 
for any $\lambda\in[0,1]$ it holds  
\[
\mathcal{C}_2(\lambda K_1+(1-\lambda)K_0)^{\frac{1}{N-2}}\ge
\lambda\,\mathcal{C}_2(K_1)^{\frac{1}{N-2}}+(1-\lambda)\,\mathcal{C}_2(K_0)^{\frac{1}{N-2}}.
\]
We refer to \cite{sch,gardner} for a comprehensive survey on
the Brunn-Minkowski inequality. 

\smallskip

The main purpose of this paper is to show the analogous
of these results in the fractional setting $\alpha=1$, that is, $s=1/2$ in Problem \eqref{problemaprincipale}. 
More precisely, we shall prove the following result.

\begin{theorem}\label{main}
Let $K\subset\R^N$ be a convex body
and let $u$ be the solution of Problem \eqref{problemaprincipale}
with $s=1/2$. Then
\begin{itemize}
\item[(i)] $u$ is level set
convex, that is, for every $c\in\R$ the set $\{u>c\}$ is convex; 
\item[(ii)] the $1$-Riesz capacity ${\rm Cap}_{1}(K)$ satisfies the following
Brunn-Minkowski inequality: for any couple of convex bodies $K_0$ and $K_1$ 
and for any $\lambda\in[0,1]$ we have
\begin{equation}\label{brunnminkowskiintro}
{\rm Cap}_{1}(\lambda K_1+(1-\lambda)K_0)^{\frac{1}{N-1}}\ge 
\lambda{\rm Cap}_{1}(K_1)^{\frac{1}{N-1}}+(1-\lambda){\rm Cap}_{1}(K_0)^{\frac{1}{N-1}}.
\end{equation}
\end{itemize}
\end{theorem}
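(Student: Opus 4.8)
The core idea: use the Caffarelli–Silvestre extension to reformulate the nonlocal problem for $s=1/2$ as a local degenerate elliptic problem in one extra dimension, then apply known convexity techniques for harmonic-type functions to that local problem. For $s=1/2$ the extension weight is trivial, so the extended function $U(x,t)$ on $\R^N\times(0,\infty)$ is simply harmonic in $N+1$ variables, with $U=1$ on $K\times\{0\}$, $\partial_t U=0$ on $(\R^N\setminus K)\times\{0\}$, and $U\to 0$ at infinity. Then $U$ is (up to normalization) the $2$-capacitary potential of the convex body $K\times\{0\}\subset\R^{N+1}$ — except that $K\times\{0\}$ is a convex set with empty interior, i.e. a degenerate convex body. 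So the whole theorem should follow by applying the known results (Borell, Colesanti–Salani, Caffarelli–Spruck, and the references \cite{borell,colsal,cs,Cs,K,k}) for the classical $2$-capacity in $\R^{N+1}$ to the sets $K\times\{0\}$, provided those results survive the degeneracy.

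The steps I would carry out, in order. \textbf{Step 1.} Make the extension rigorous: show that the Riesz potential $v$ of $K$ in $\R^N$ (with $\alpha=1$) is the trace on $\{t=0\}$ of the function $U(x,t)=\int_{\R^N}\frac{d\mu(y)}{(|x-y|^2+t^2)^{(N-1)/2}}$, that this $U$ is harmonic on $\R^{N+1}\setminus(K\times\{0\})$, equals $I_1(K)$ on $K\times\{0\}$, and decays at infinity; hence $U/I_1(K)$ solves \eqref{pcapacityeq} with $E=K\times\{0\}$, and $\mathcal C_2(K\times\{0\})=c\,\c_1(K)$ for an explicit dimensional constant $c$. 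This identifies both the level sets of $u$ (slices $t=0$ of the level sets of $U$) and the capacities. \textbf{Step 2.} For part (i), approximate $K\times\{0\}$ by genuine convex bodies, e.g. $K_\delta := K\times[-\delta,\delta]$ or a smooth convex neighborhood shrinking to $K\times\{0\}$; the corresponding $2$-capacitary potentials $U_\delta$ are level set convex by the classical results, and one passes to the limit $\delta\to0$ using uniform convergence of capacitary potentials under Hausdorff convergence of convex bodies, which preserves convexity of level sets; restricting to $t=0$ gives level set convexity of $u$. \textbf{Step 3.} For part (ii), apply the classical Brunn–Minkowski inequality for $\mathcal C_2$ in $\R^{N+1}$ to $K_0\times\{0\}$ and $K_1\times\{0\}$: since $(\lambda K_1+(1-\lambda)K_0)\times\{0\} = \lambda(K_1\times\{0\})+(1-\lambda)(K_0\times\{0\})$, and the exponent in dimension $N+1$ is $1/((N+1)-2)=1/(N-1)$, the inequality \eqref{brunnminkowskiintro} for $\c_1$ follows directly from the one for $\mathcal C_2$ once the constant from Step 1 is absorbed (it is the same constant on both sides). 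Again this needs the degenerate case, handled by the same approximation as in Step 2 together with continuity of $\mathcal C_2$ and of Minkowski sums.

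The main obstacle is the degeneracy: the cited results on level set convexity and the Brunn–Minkowski inequality for $\mathcal C_2$ are stated for convex bodies (nonempty interior), whereas $K\times\{0\}$ has empty interior in $\R^{N+1}$. I expect to spend most of the effort on the approximation argument — proving that $\mathcal C_2(K_\delta)\to \mathcal C_2(K\times\{0\})$ and that the capacitary potentials $U_\delta$ converge locally uniformly to $U$ — and on checking that convexity of the level sets is stable under this limit (a closed condition, but one must rule out the level sets degenerating or the limit potential being constant on a set of positive measure). A secondary technical point is justifying the extension identity in Step 1 with the correct decay at infinity and the correct Neumann condition off $K$; this is essentially the $s=1/2$ Caffarelli–Silvestre formula, but since we are on the complement of a compact set with a prescribed potential rather than solving a PDE with given data, I would verify it directly from the explicit Riesz kernel rather than quoting the extension theorem.
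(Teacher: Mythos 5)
Your proposal is correct. For part (i) it coincides with the paper's argument in essentially every respect: both pass to the harmonic extension $U$ in $\R^{N+1}$, identify it (via the even symmetry in $t$) with the $2$-capacitary potential $V$ of the degenerate convex set $K\times\{0\}$, and deduce level set convexity of $V$ by approximating $K\times\{0\}$ with genuine convex bodies and invoking \cite{colsal} together with uniform convergence of capacitary potentials under Hausdorff convergence (the paper's Lemma 2.2).

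For part (ii) your route is genuinely different and cleaner. The paper first reduces \eqref{brunnminkowskiintro}, by $(1-N)$-homogeneity of $\c_1$, to the ``min-form'' $\c_1(K_\lambda)\ge\min\{\c_1(K_0),\c_1(K_1)\}$, characterizes $\c_1(K)$ as $\lim_{|x|\to\infty}|x|^{N-1}v_K(x)$, and then proves the pointwise comparison $v_\lambda\ge\min\{v_0,v_1\}$ by lifting the Riesz potentials to $\R^{N+1}$, building the Minkowski-combination function $\widetilde V_\lambda$, and invoking the intermediate inequality $\widetilde V_\lambda^\e\le V_\lambda^\e$ from pages $474$--$476$ of \cite{colsal} on regularized bodies; this essentially re-derives the min-form of Brunn--Minkowski for the degenerate slabs from Colesanti--Salani's key lemma. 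You instead note that the same symmetry-and-asymptotics considerations immediately give the identity $\mathcal C_2(K\times\{0\})=c\,\c_1(K)$ with a dimensional constant $c$, that the Minkowski combination commutes with the embedding $K\mapsto K\times\{0\}$, and that the exponent $1/((N+1)-2)=1/(N-1)$ is exactly right; so \eqref{brunnminkowskiintro} follows by applying the \emph{statement} of the Brunn--Minkowski inequality for $\mathcal C_2$ in $\R^{N+1}$ to the approximating bodies $K_i\times[-\delta,\delta]$ (or $K_i\times\{0\}+B(\delta)$) and passing to the limit, using the same continuity lemma already needed for (i). This black-boxes \cite{borell,colsal} rather than unpacking it, avoids constructing $\widetilde v_\lambda,\widetilde V_\lambda$, and makes the role of the dimensional coincidence transparent; the price is that it proves only the capacity comparison and not the pointwise inequality $v_\lambda\ge\min\{v_0,v_1\}$, which the paper obtains as a by-product. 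Both approaches require the same continuity-under-Hausdorff-convergence lemma to pass from bodies with interior to the degenerate sets, so the technical overhead you anticipate is the same as in the paper and is handled there.
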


The proof of the  Theorem \ref{main} will be given in Section \ref{s2}, 
and relies on the results in \cite{cuosal,colsal} and on the following observation  
due to L. Caffarelli and L. Silvestre.

\begin{proposition}[\cite{cafsil}]\label{caffarellisilvestre}
Let $f:\R^N\to\R$ be a measurable function and let $U:\R^N\times[0,+\infty)$ be the solution of 
 \[
  \Delta_{(x,t)} U(x,t)=0, \,\,\,\text{ on $\R^N\times(0,+\infty)$}\qquad U(x,0)=f(x).
 \]
Then, for any $x\in\R^N$ there holds 
\[
 \lim_{t\to0^+}\partial_t U(x,t)=(-\Delta)^\frac 1 2 f(x).
\]
\end{proposition}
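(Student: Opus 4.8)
The plan is to write the harmonic extension $U$ explicitly as a Poisson integral of $f$ over the half-space and then differentiate in the vertical variable $t$. Recall first that the bounded solution of $\Delta_{(x,t)}U=0$ on $\R^N\times(0,+\infty)$ with boundary value $f$ is the convolution (in the $x$-variable) of $f$ with the half-space Poisson kernel,
\[
U(x,t)=(P_t*f)(x),\qquad P_t(x)=c_N\,\frac{t}{(|x|^2+t^2)^{(N+1)/2}},\qquad c_N=\frac{\Gamma\!\left(\tfrac{N+1}{2}\right)}{\pi^{(N+1)/2}},
\]
where $c_N$ is fixed by $\int_{\R^N}P_t=1$ for every $t>0$; one checks directly that $(x,t)\mapsto P_t(x)$ is harmonic on $\R^N\times(0,+\infty)$ and that $\{P_t\}$ is an approximate identity as $t\to0^+$, so that $U(\cdot,0)=f$. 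For this representation to make sense — and for $(-\Delta)^{1/2}f(x)$ to be defined pointwise — one needs, as in \cite{cafsil}, a mild decay of $f$ at infinity (e.g.\ $f\in L^\infty(\R^N)$, or $\int_{\R^N}|f(y)|(1+|y|)^{-(N+1)}\,dy<\infty$) together with local $C^{1,1}$ regularity of $f$ near $x$; in the application to Theorem \ref{main} these are automatic, since there $f$ is the capacitary potential $v_K$, which is bounded, smooth on $\R^N\setminus K$, and decays like $|x|^{1-N}$ at infinity.

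The identity is quickest to see on the Fourier side: the equation becomes $\partial_{tt}\widehat U(\xi,t)=|\xi|^2\widehat U(\xi,t)$, whose bounded solution with datum $\widehat f$ is $\widehat U(\xi,t)=e^{-t|\xi|}\widehat f(\xi)$, so that $\partial_t\widehat U(\xi,t)=-|\xi|\,e^{-t|\xi|}\widehat f(\xi)\to-|\xi|\widehat f(\xi)$ as $t\to0^+$; since $\mathcal F\big((-\Delta)^{1/2}f\big)(\xi)=|\xi|\widehat f(\xi)$, inverting the Fourier transform gives the stated formula (up to the orientation convention for $\partial_t$ at $t=0$). For a genuinely pointwise argument I would instead work in real variables: using $\int_{\R^N}P_t\equiv1$ one may subtract $f(x)$ and write $\partial_t U(x,t)=\int_{\R^N}\partial_t P_t(x-y)\,\big(f(y)-f(x)\big)\,dy$, and the elementary identity $\partial_t P_t(z)=c_N\,(|z|^2-Nt^2)(|z|^2+t^2)^{-(N+3)/2}$ shows that the kernel converges pointwise to $c_N\,|z|^{-(N+1)}$. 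Splitting the integral over $\{|x-y|\le1\}$ and $\{|x-y|>1\}$ and passing to the limit yields
\[
\lim_{t\to0^+}\partial_t U(x,t)=c_N\ \mathrm{p.v.}\!\int_{\R^N}\frac{f(y)-f(x)}{|x-y|^{N+1}}\,dy,
\]
which — up to the same sign convention — is exactly $(-\Delta)^{1/2}f(x)$, since $c_N=\Gamma\!\left(\tfrac{N+1}{2}\right)\pi^{-(N+1)/2}$ is precisely the normalizing constant in the singular-integral representation of the $1/2$-Laplacian.

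The only real work is the limit passage invoked above. On $\{|x-y|\le1\}$ one uses that $z\mapsto\partial_t P_t(z)\,z$ is odd — so the linear term in the Taylor expansion of $f$ around $x$ integrates to zero — together with the $C^{1,1}$ bound, which turns the integrand into something of order $|x-y|^{1-N}$, integrable uniformly in $t\in(0,1)$; on $\{|x-y|>1\}$ one uses the elementary estimate $|\partial_t P_t(z)|\le C_N|z|^{-(N+1)}$, valid for $|z|\ge1$ and $0<t<1$, together with the decay hypothesis on $f$. Dominated convergence then applies on each region, while the algebraic identity for $\partial_t P_t$ and the identification of the constant $c_N$ are routine. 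I expect this interchange of limit and integral to be the main — indeed the only genuine — obstacle.
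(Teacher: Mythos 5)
The paper does not prove this proposition: it is quoted verbatim from Caffarelli--Silvestre \cite{cafsil}, so there is no in-paper argument to compare against. Your proof is a faithful reconstruction of the standard argument behind that result, and both of your routes (Fourier side via $\widehat U(\xi,t)=e^{-t|\xi|}\widehat f(\xi)$, and real-variable side via the Poisson kernel $P_t$) are essentially the ones in the extension literature; the second is what one needs if one wants a genuinely pointwise statement under a $C^{1,1}$ hypothesis, and your handling of the uniform bound $|\partial_t P_t(z)|\le C_N|z|^{-(N+1)}$ and the oddness cancellation is the right way to justify dominated convergence.

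One thing you should not wave away as a ``convention,'' though: with the paper's own definition of $(-\Delta)^{1/2}$ by the Fourier symbol $|\xi|$, both of your computations correctly produce
\[
\lim_{t\to0^+}\partial_t U(x,t)=-\,(-\Delta)^{1/2}f(x),
\]
not $+(-\Delta)^{1/2}f(x)$. Indeed $\partial_t\widehat U(\xi,t)=-|\xi|e^{-t|\xi|}\widehat f(\xi)$, and your kernel formula gives $c_N\,\mathrm{p.v.}\!\int(f(y)-f(x))|x-y|^{-(N+1)}\,dy$, which is the \emph{negative} of the singular-integral representation $(-\Delta)^{1/2}f(x)=c_N\,\mathrm{p.v.}\!\int(f(x)-f(y))|x-y|^{-(N+1)}\,dy$ (and you correctly observe that $c_N=\Gamma((N+1)/2)\pi^{-(N+1)/2}$ matches both normalizations, so there is no compensating constant). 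The statement as printed in the paper is therefore off by a sign. This is harmless for the paper's use of the proposition --- in the proof of Theorem~\ref{main}~(i) only the equivalence $\partial_t V(x,0)=0 \Leftrightarrow (-\Delta)^{1/2}V(x,0)=0$ is invoked, which is sign-independent --- but in a standalone proof you should state the correct sign rather than attribute the discrepancy to orientation. Apart from this, your argument is complete and correct.
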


Eventually, in Section \ref{s3} we provide an application of Theorem \ref{main} and we state some open problems.
%

\section{Proof of the main result}\label{s2}

This section is devoted to the proof of Theorem \ref{main}. 

\begin{lemma}\label{lemmas2}
Let $K$ be a compact convex set with positive $2$-capacity
and let $(K_\varepsilon)_{\varepsilon>0}$ be a family of compact convex sets with positive $2$-capacity such that
$K_\e\to K$ in the Hausdorff distance, as $\e\to0$. 
Letting $u_\e$ and $u$ be the capacitary functions of $K_\e$ and $K$ respectively, we have
that $u_\e$ converges uniformly on $\R^N$ to $u$ as $\e\to0$. As a consequence, we have that the sequence
${\mathcal C}_2(K_\e)$ converges to ${\mathcal C}_2(K)$, 
and that the sets $\{u_\e>s\}$ converge to $\{u>s\}$ for any $s>0$, with respect to
the Hausdorff distance.
\end{lemma}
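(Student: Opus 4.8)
The plan is to establish uniform convergence of the capacitary functions $u_\e$ to $u$, from which both the convergence of the capacities and the Hausdorff convergence of the superlevel sets will follow by soft arguments. The capacitary function $u$ of a compact convex set $K$ with positive $2$-capacity is the unique solution of \eqref{pcapacityeq}; equivalently it is the minimizer of the Dirichlet integral among functions above $\chi_K$, and by the maximum principle it is monotone with respect to set inclusion. The essential quantitative input is a \emph{modulus of continuity estimate up to the boundary}: for a convex body $K$ one can place, at every boundary point $x_0\in\partial K$, a suitable exterior barrier (for instance the capacitary function of a half-space or of the complement of a ball touching $K$ from outside at $x_0$, or a cone barrier), which forces $u$ to decay from $1$ in a controlled way near $\partial K$. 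The crucial point is that Hausdorff convergence $K_\e\to K$ gives uniform interior and exterior ball-type (or cone-type) conditions along the sequence, so the barriers, and hence the boundary moduli of continuity of the $u_\e$, can be chosen \emph{uniformly} in $\e$.

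Concretely, I would proceed in the following steps. First, given $\eta>0$, use Hausdorff convergence to find $\e_0$ so that for $\e<\e_0$ we have $K_\e\subset K+B_\eta$ and $K\subset K_\e+B_\eta$. Second, by monotonicity, $u_\e\le u_{K+B_\eta}$ and $u\le u_{K+B_\eta}$, while $u_\e\ge u_{K\setminus(\partial K+B_\eta)}$-type lower bounds (more precisely, comparison with the capacitary function of a slightly shrunk convex body contained in $K_\e$) hold; combined with the equicontinuity coming from the uniform barriers and from standard interior elliptic estimates for harmonic functions on $\R^N\setminus K_\e$, one gets that $\|u_\e-u\|_{L^\infty(\R^N)}\to 0$. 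A clean way to package this: the $u_\e$ are uniformly bounded and uniformly equicontinuous (interior estimates away from the boundaries, barriers near the boundaries, and decay at infinity is uniform since all $K_\e$ lie in a fixed large ball), so by Arzel\`a--Ascoli every subsequence has a locally uniformly convergent further subsequence; any limit is harmonic off $K$, equals $1$ on $K$, and vanishes at infinity, hence equals $u$ by uniqueness, giving full convergence. Third, ${\mathcal C}_2(K_\e)\to{\mathcal C}_2(K)$ follows either from $L^\infty$ (hence Dirichlet-energy, via the weak formulation) convergence, or directly from the representation of capacity as a boundary flux together with the uniform gradient bounds. Fourth, for the superlevel sets: since $u$ is continuous and (being the capacitary function of a convex body) has convex superlevel sets with $u<1$ strictly outside $K$ and $u\to0$ at infinity, the level $\{u=s\}$ has empty interior for $s\in(0,1)$; uniform convergence $u_\e\to u$ then sandwiches $\{u>s+\delta\}\subset\{u_\e>s\}\subset\{u>s-\delta\}$ for large $\e$, and a short argument upgrades this to Hausdorff convergence of $\{u_\e>s\}$ to $\{u>s\}$.

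The main obstacle is the \emph{uniform boundary regularity}: one must ensure that the modulus of continuity of $u_\e$ at $\partial K_\e$ does not degenerate as $\e\to0$. This is where convexity is used essentially — a convex body satisfies a uniform exterior cone condition (indeed an exterior ball condition after slight enlargement, and a uniform interior ball condition on large balls), and these geometric conditions are stable under Hausdorff convergence of convex sets, so a single barrier construction works for all $\e$ small. Once the uniform modulus of continuity is in hand, everything else is the routine Arzel\`a--Ascoli plus uniqueness argument sketched above. I would also remark that the statement and proof are classical (the result is essentially contained in the references \cite{colsal,cuosal} on the $2$-capacity), so I would keep the exposition brief, citing those works for the barrier and regularity estimates and emphasizing only the points specific to our setting.
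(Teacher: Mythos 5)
Your argument is correct in spirit, but it takes a noticeably more roundabout route than the paper. The paper's proof is a direct maximum-principle comparison: one observes that $u_\e-u$ is harmonic on $\R^N\setminus(K\cup K_\e)$ and vanishes at infinity, so
\[
\sup_{\R^N\setminus(K\cup K_\e)}|u_\e-u|\le\sup_{\partial(K\cup K_\e)}|u_\e-u|
\le\max\Bigl\{1-\min_{\partial(K\cup K_\e)}u,\,1-\min_{\partial(K\cup K_\e)}u_\e\Bigr\},
\]
and then uses Hausdorff convergence ($K_\e\subset K+B_{r_\e}$, $K\subset K_\e+B_{r_\e}$ with $r_\e\to0$) to place $\partial(K\cup K_\e)$ inside a thin collar around both $K$ and $K_\e$, so that both minima tend to $1$. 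This yields uniform convergence on all of $\R^N$ in one stroke, with no compactness extraction. Your scheme instead establishes uniform equicontinuity via barriers, invokes Arzel\`a--Ascoli on subsequences, identifies the limit by harmonicity, boundary values, and decay, and concludes by uniqueness. Both approaches ultimately rest on the same analytic input, namely a modulus of continuity for the capacitary functions near $\partial K_\e$ that is uniform in $\e$ (in the paper this is hidden in the unexplained step $\min_{K_\e+B(2r_\e)}u_\e\to1$; in yours it is the explicit barrier construction). The paper's route is shorter and immediately quantitative; yours is more verbose but makes the uniform-boundary-regularity hypothesis explicit, which some readers may find clarifying. One small point to tighten in your write-up: to conclude that a subsequential limit equals $1$ on $K$ you should note that, by Hausdorff convergence of convex bodies with nonempty interior, every interior point of $K$ lies in $K_\e$ for $\e$ small, and then handle $\partial K$ by the barrier estimate; as stated this step is a bit elided.
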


\begin{proof}
We only prove that $u_\e\to u$ uniformly as $\e\to0$ since this  immediately implies the other claims. 
Let $\Omega_\e=K\cup K_\e$. Since $u_\e-u$ is a harmonic function on 
$\R^N\setminus \Omega_\e$, we have that
\begin{equation}\label{formulalemmas2}
\sup_{\R^N\setminus\Omega_\e}|u_\e-u|\le \sup_{\partial \Omega_\e}
|u_\e-u|\le\max\left\{1-\min_{\partial\Omega_\e}u,1-\min_{\partial\Omega_\e}u_\e\right\}.
\end{equation}
Moreover, by Hausdorff convergence, we know that there exists a sequence $(r_\e)_\e$ infinitesimal as $\e\to0$
such that $K_\e\subset K+B_{r_\e}$, where $B(r)$ indicates the ball of radius $r$ centred at the origin. Thus
\begin{equation}\label{formulalemmas3}
\min\left\{
\min_{\partial\Omega_\e}u,\min_{\partial\Omega_\e}u_\e\right\}
\ge\min\left\{\min_{K+B(2r_\e)}u,\min_{K_\e+B(2r_\e)}u_\e\right\}.
\end{equation}
Since the right-hand side of \eqref{formulalemmas3}
converges to $1$ as $\e\to 0$, from \eqref{formulalemmas2}
we obtain 
\[
\lim_{\e\to 0}\sup_{\R^N\setminus\Omega_\e}|u_\e-u|=0,
\]
which brings to the conclusion.
\end{proof}
 
\begin{remark}\rm 
Notice that a compact convex set has positive $2$-capacity if and only if 
its $\mathcal H^{N-1}$-measure is non-zero (see \cite{eg}).
\end{remark}

\begin{proof}[Proof of Theorem \ref{main}]
We start by proving claim $(i)$. Let us consider the problem 
\begin{equation}\label{auxiliaryproblem}
\left\{ \begin{array}{ll}
        -\Delta_{(x,t)} U(x,t)=0 & \mbox{in $\R^N\times(0,\infty)$}\\
        U(x,0)=1 & \mbox{ $x\in K$}\\
        U_t(x,0)=0 &\mbox{in $x\in \R^N\setminus K$}\\
        \lim_{|(x,t)|\to\infty}U(x,t)=0.
        \end{array} \right.
\end{equation}
By Proposition \ref{caffarellisilvestre} we have that
$U(x,0)=u(x)$ for every $x\in\R^N$. 
Notice also that, for any $c\in\R$, we have
\[
 \{u\ge c\}=\{(x,t):U(x,t)\ge c\}\cap\{t=0\}
\]
which entails that $u$ is level set convex, provided that $U$ is level set convex. In order to prove this, we
introduce the problem
\begin{equation}\label{auxiliaryproblem2}
\left\{ \begin{array}{ll}
         \Delta_{(x,t)} V(x,t)=0 & \mbox{in $\R^{N+1}\setminus K$}\\
        V=1 & \mbox{ $x\in K$}\\
        \lim_{|(x,t)|\to\infty}V(x,t)=0
        \end{array} \right.
\end{equation}
whose solution is given by the capacitary function of the set $K$ in $\R^{N+1}$, that is, the function which
achieves the minimum in Problem \eqref{pcapacity}.
%
%

Since $K$ is symmetric with respect to the hyperplane $\{t=0\}$ (where it is contained), it
follows, for instance by applying a suitable version of the P\'olya-Szeg\"o inequality for the Steiner
symmetrization (see for instance
\cite{brock,burchard}), that $V$ is symmetric as well with respect to the same
hyperplane. In particular we have that $\partial_t V(x,0)=0$ for all $x\in\R^N\setminus K$. This implies that
$V(x,t)=U(x,t)$
for every $t\ge0$. To conclude the proof, we are left to check that $V$ is level set convex. To prove this we
recall that the capacitary function of a convex body is level set convex, as proved in \cite{colsal}. Moreover, by
Lemma \ref{lemmas2} applied to the sequence of convex bodies $K_\e=K+B(\e)$ we get that $V$ is level set convex as
well.
This concludes the proof of $(i)$.  

%

To prove $(ii)$ we start by noticing that the $1$-Riesz capacity is a 
$(1-N)$-homogeneous functional, hence 
inequality \eqref{brunnminkowskiintro} can be equivalently stated (see
for instance \cite{borell}) by requiring that, for any couple of
convex sets $K_0$ and $K_1$ and for any $\lambda\in[0,1]$, the inequality 
\begin{equation}\label{brunnminkowski}
 {\rm Cap}_{1}(\lambda K_1+(1-\lambda)K_0)\ge \min\{{\rm Cap}_{1}(K_0),{\rm Cap}_{1}(K_1)\}
\end{equation}
holds true. 

\smallskip

We divide the proof of \eqref{brunnminkowski} into two steps.\\ \\
{\bf Step 1.}\\
We characterize the $1$-Riesz capacity of a convex set $K$ as the behaviour at infinity of the solution of the
following PDE
\[\left\{
\begin{array}{ll}
 (-\Delta)^{1/2} v_K=0\qquad &\mbox{ in $\R^N\setminus K$}\\
 v_K=1 & \mbox{in $K$}\\
 \lim_{|x|\to\infty}|x|^{N-1}v_K(x)={\rm Cap}_{1}(K)
\end{array}\right.
\]
We recall that, if $\mu_K$ is the optimal measure for the minimum problem 
in \eqref{rieszpotentialenergy}, then the function 
\[
 v(x)=\int_{\R^N}\frac{d\mu_K(y)}{|x-y|^{N-1}}
\]
is harmonic on $\R^N\setminus K$ and is constantly equal to $I_{1}(K)$ on $K$ (see for instance
\cite{golnovruf}). Moreover the optimal measure $\mu_K$ is supported on $K$, 
so that $|x|^{N-1}v(x)\to
\mu_K(K)=1$ as $|x|\to\infty$. The claim follows by letting $v_K=v/I_{1}(K)$. \\ \\
{\bf Step 2.}\\
Let $K_\lambda=\lambda K_1+(1-\lambda)K_0$ and 
$v_\lambda=v_{K_\lambda}$. We want to prove that 
\[
v_\lambda(x)\ge \min\{ v_0(x), v_1(x)\}
\]
for any $x\in\R^N$.
To this aim we introduce the auxiliary function
\[
 \widetilde v_\lambda(x)=
\sup\big\{\min\{v_0(x_0),v_1(x_1)\}:x=\lambda x_1+(1-\lambda)x_0\big\},
\]
and we notice that Step $2$ follows if we show that $v_\lambda\ge \v$. 
An equivalent
formulation of this
statement is to require that for any $s>0$ we have
\begin{equation}\label{claim}
\{\v>s\}\subseteq \{v_\lambda>s\}.
\end{equation}
A direct consequence of the definition of $\v$ is that
\[
 \{\v>s\}=\lambda\{v_1>s\}+(1-\lambda)\{v_0>s\}.
\]
For all $\lambda\in [0,1]$, we let  $V_\lambda$ be the harmonic extension 
of $v_\lambda$ on $\R^N\times[0,\infty)$, which solves
\begin{equation}
\left\{ \begin{array}{ll}
        -\Delta_{(x,t)} V_\lambda(x,t)=0 & \mbox{in $\R^N\times(0,\infty)$}\\
        V_\lambda(x,0)=v_\lambda(x) & \mbox{in $\R^N\times\{0\}$}\\ 
        \lim_{|(x,t)|\to\infty}V_\lambda(x,t)=0.
               \end{array} \right.
\end{equation}
Notice that $V_\lambda$ is the
capacitary function of $K_\lambda$ in
$\R^{N+1}$, restricted to $\R^N\times [0,+\infty)$. 
Letting $H=\{(x,t)\in\R^N\times\R:t=0\}$,
for any $\lambda\in[0,1]$ and $s\in\R$ we have 
$$\{V_\lambda>s\}\cap
H=\{v_\lambda>s\}.$$ 
Letting also
\begin{equation}\label{interpolata}
 \widetilde V_\lambda(x,t)=\sup\{\min\{V_0(x_0,t_0),V_1(x_1,t_1)\}:(x,t)=\lambda(x_1,t_1)+(1-\lambda)(x_0,t_0)\},
\end{equation}
as above we have that
\[
\{ \widetilde V_\lambda>s\}=\lambda\{V_1>s\}+(1-\lambda)\{V_0>s\}.
\]
By applying again Lemma \ref{lemmas2} 
to the sequences $K_0^\e=K_0+B(\varepsilon)$ and
$K_1^\e=K_1+B(\varepsilon)$, we get that 
the corresponding capacitary functions, 
denoted respectively as $V_0^\e$ and $V_1^\e$,
converge uniformly to $V_0$ and $V_1$ in $\R^N$, and that 
$\widetilde V_\lambda^\e$, defined as in
\eqref{interpolata}, 
converges uniformly to $\widetilde V_\lambda$ on $\R^{N}\times [0,+\infty)$.
 
Since $\widetilde V_\lambda^\e(x,t)\le V^\e_\lambda(x,t)$
for any $(x,t)\in\R^{N}\times [0,+\infty)$, as shown in
\cite[pages $474-476$]{colsal}, we have that 
$\widetilde V_\lambda(x,t)\le V_\lambda(x,t)$.
As a consequence, we get
\[
\begin{aligned}
\{v_\lambda>s\}&=\{V_\lambda>s\}\cap H\supseteq \{\widetilde V_\lambda>s\}\cap
H=\Big[\lambda\{V_1>s\}+(1-\lambda)\{V_0>s\} \Big]\cap H\\
&\supseteq \lambda\{V_1>s\}\cap H+(1-\lambda)\{V_0>s\}\cap H=\lambda\{v_1>s\}+(1-\lambda)\{v_0>s\}
\end{aligned}
\]
for any $s>0$, which is the claim of {\em Step 2}.

We conclude  by observing that inequality 
\eqref{brunnminkowski} follows immediately,
by putting together {\em Step $1$} and {\em Step $2$}. 
This concludes the proof of $(ii)$, and of the theorem.
\end{proof}

\begin{remark}\rm
 The equality case in the Brunn-Minkowski inequality \eqref{brunnminkowskiintro} is not easy to address
by means of our techniques. 
The problem is not immediate even in the case of the $2$-capacity, for which it has been studied in
\cite{cafjerlie,colsal}.
\end{remark}

\section{Applications and open problems}\label{s3}

In this section we state a corollary of  Theorem \ref{main}. To do this we introduce some tools which arise in
the study of convex bodies. The {\em support function} of a convex body $K\subset\R^N$ is defined on the unit
sphere centred at the origin  $\partial
B(1)$ as
\[
 h_K(\nu)=\sup_{x\in\partial K} \langle x,\nu\rangle.
\]
The {\em mean width} of a convex body $K$ is
\[
 M(K)=\frac{2}{\mathcal{H}^{N-1}(\partial B(1))}\int_{\partial B(1)} h_K(\nu)\,d\mathcal H^{N-1}(\nu).
\]
We refer to \cite{sch} for a complete reference on the subject. 
We observe that, if $N=2$, then $M(K)$
coincides up to a constant with the perimeter $P(K)$ of $K$
(see \cite{bucfralam}).

We denote by $\mathcal K_N$ the set of convex bodies of $\R^N$ and we set
\[
\mathcal K_{N,c}=\{K\in \mathcal K_N,\,M(K)=c\}.
\]
The following result has
been proved in \cite{bucfralam}.
\begin{theorem}\label{bucfralamthm}
 Let $F:\mathcal{K}_N\to[0,\infty)$ be a $q$-homogeneous functional which satisfies the Brunn-Minkowski inequality,
that is, such that $F(K+L)^{1/q}\ge F(K)^{1/q}+F(L)^{1/q}$ for any $K,L\in\mathcal{K}_N$. 
Then the ball is the unique solution of the problem
\begin{equation}
\min_{K\in\mathcal{K}_N} \frac{M(K)}{F^{1/q}(K)}\,.
\end{equation}
\end{theorem}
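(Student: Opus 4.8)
The plan is to prove Theorem~\ref{bucfralamthm} by combining the Brunn--Minkowski inequality for $F$ with the linearity of the mean width under Minkowski sums, and then identifying equality cases. First I would record the two structural facts that make the quotient $M(K)/F^{1/q}(K)$ behave well under Minkowski interpolation: the mean width is $1$-homogeneous and \emph{Minkowski additive}, i.e. $M(K+L)=M(K)+M(L)$, because $h_{K+L}=h_K+h_L$ and $M$ is (up to the normalising constant) the integral of the support function over $\partial B(1)$; and, by hypothesis, $F^{1/q}(K+L)\ge F^{1/q}(K)+F^{1/q}(L)$. Consequently, for $K_0,K_1\in\mathcal K_N$ and $\lambda\in[0,1]$, writing $K_\lambda=\lambda K_1+(1-\lambda)K_0$ we get
\[
\frac{M(K_\lambda)}{F^{1/q}(K_\lambda)}\le\frac{\lambda M(K_1)+(1-\lambda)M(K_0)}{\lambda F^{1/q}(K_1)+(1-\lambda)F^{1/q}(K_0)}\le\max\left\{\frac{M(K_0)}{F^{1/q}(K_0)},\frac{M(K_1)}{F^{1/q}(K_1)}\right\},
\]
the last step being the elementary mediant inequality $\frac{a_1+a_2}{b_1+b_2}\le\max\{a_1/b_1,a_2/b_2\}$ for positive $b_i$. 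So the functional $\Phi(K):=M(K)/F^{1/q}(K)$ is ``Minkowski quasi-concave'' in a suitable sense, and in particular no interpolation of two competitors does worse than the better endpoint.

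Next I would turn this into an actual minimisation statement. Since $\Phi$ is scale invariant ($0$-homogeneous), we may normalise and work on $\mathcal K_{N,c}$ for a fixed $c$, where minimising $\Phi$ amounts to \emph{maximising} $F$. The natural route is a symmetrisation/averaging argument: given any $K\in\mathcal K_{N,c}$, rotate and average it over $SO(N)$ (a Minkowski average of the rotates $\rho K$, $\rho\in SO(N)$, which is a limit of finite Minkowski combinations) to obtain a ball $B^*$ with the same mean width $c$, since $M$ is rotation invariant and Minkowski additive. By the quasi-concavity just established, iterated/averaged Minkowski combinations cannot decrease $F^{1/q}$ below $F^{1/q}(K)$ — more precisely $F^{1/q}$ is ``Minkowski concave along the averaging'' — so $F(B^*)\ge F(K)$, hence $\Phi(B^*)\le\Phi(K)$. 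This shows the ball is \emph{a} minimiser.

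For uniqueness I would invoke the equality case. If $\Phi(K)=\Phi(B^*)$ for some non-ball $K$, then tracking the chain of inequalities forces equality in the Brunn--Minkowski inequality for $F$ along the rotational averaging — i.e. $F^{1/q}(\rho_1 K+\rho_2 K)=F^{1/q}(\rho_1 K)+F^{1/q}(\rho_2 K)$ for rotations $\rho_i$ — which (again by scale invariance and rotation invariance of $F$, $M$) says $\rho_1 K$ and $\rho_2 K$ are ``Brunn--Minkowski parallel'' for all rotations; combined with the fact that a convex body homothetic to all its rotations is a ball, this yields a contradiction. The main obstacle I anticipate is precisely this equality analysis: the hypothesis of Theorem~\ref{bucfralamthm} gives only the \emph{inequality} for $F$, with no a priori information on its equality cases (indeed, as noted in the Remark above, equality in Brunn--Minkowski for capacities is delicate). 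So the uniqueness half likely needs a different, softer mechanism — e.g. a strict form of the mediant inequality whenever $M(K_0)/F^{1/q}(K_0)\ne M(K_1)/F^{1/q}(K_1)$, applied to $K_1=K$ and $K_0=\rho K$ for a generic rotation $\rho$ with $\rho K\ne K$ — rather than an appeal to rigidity in Brunn--Minkowski for $F$ itself. Getting this last point clean, without extra regularity or strict-concavity hypotheses on $F$, is the crux.
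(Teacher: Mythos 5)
First, a structural point: the paper does not prove Theorem~\ref{bucfralamthm}. It is quoted from \cite{bucfralam} and invoked as a black box, so there is no in-paper argument to compare yours against; what follows is an assessment of your proposal on its own terms.

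Your existence argument is sound and is indeed the mechanism behind the cited result: $M$ is Minkowski-additive since $h_{K+L}=h_K+h_L$, $F^{1/q}$ is Minkowski-superadditive by hypothesis, the mediant inequality gives $\Phi(K_\lambda)\le\max\{\Phi(K_0),\Phi(K_1)\}$ for $\Phi=M/F^{1/q}$, and Minkowski-averaging $K$ over $SO(N)$ produces a ball of the same mean width with $\Phi$ no larger. Two hypotheses are used tacitly and are absent from the statement as written: continuity of $F$ on $\mathcal K_N$ (to pass from finite Minkowski averages of rotates to the $SO(N)$-integral) and rotation invariance of $F$ (without which the ball has no reason to be extremal). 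Both hold for the intended application $F={\rm Cap}_{1}$, so this is a gap in the theorem's phrasing rather than in your reasoning, but it should be stated.

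The uniqueness half is a genuine gap, as you yourself flag, and the patch you sketch cannot work. You propose a strict mediant inequality applied to $K_1=K$ and $K_0=\rho K$ ``whenever the two ratios differ.'' But $M$ is rotation-invariant, and $F$ must be too for the ball to be extremal at all, hence $\Phi(\rho K)=\Phi(K)$ for every $\rho\in SO(N)$: the two ratios in the mediant inequality are \emph{always} equal in this configuration, so strictness never activates. Any rigidity must therefore be extracted from the Brunn--Minkowski step for $F$ itself, and the bare superadditivity hypothesis carries no equality-case information --- precisely the difficulty the authors point out in the Remark following Theorem~\ref{main}. So uniqueness is not recoverable from the stated hypotheses along your route; in \cite{bucfralam} it is obtained under additional structure on $F$. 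Note, however, that Corollary~\ref{app} only uses the fact that the ball \emph{is} a minimiser, which your argument does deliver.
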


\noindent
An immediate consequence of Theorem \ref{bucfralamthm}, Theorem \ref{main} and Definition \ref{rieszcapacity} is
the following result.

\begin{corollary}\label{app}
The minimum of $I_{1}$ on the set $\mathcal K_{N,c}$
is achieved by
the ball of measure $c$. In particular, if $N=2$, the ball of radius $r$ solves the 
isoperimetric type problem
\begin{equation}\label{coroeq}
\min_{K\in\mathcal{K}_2, P(K)=2\pi r}I_{1}(K).
\end{equation}
\end{corollary}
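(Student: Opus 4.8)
The plan is to deduce Corollary~\ref{app} directly from Theorem~\ref{main}(ii) and Theorem~\ref{bucfralamthm}, which is purely a matter of checking that the hypotheses of the latter are met by the functional $F={\rm Cap}_1$ on $\mathcal K_N$. First I would recall that by Definition~\ref{rieszcapacity} one has ${\rm Cap}_1(K)=1/I_1(K)$, so minimizing $I_1$ over $\mathcal K_{N,c}$ is the same as maximizing ${\rm Cap}_1$ over that class, and equivalently as minimizing the scale-invariant quotient $M(K)/{\rm Cap}_1(K)^{1/(N-1)}$ after using that $M$ is $1$-homogeneous and ${\rm Cap}_1$ is $(N-1)$-homogeneous under dilations; indeed if $K\in\mathcal K_{N,c}$ then rescaling to unit mean width turns the constrained minimization of $I_1$ into the unconstrained minimization of the quotient, and conversely a minimizer of the quotient can be rescaled back to $\mathcal K_{N,c}$.

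Next I would verify the two hypotheses of Theorem~\ref{bucfralamthm} with $q=N-1$. Homogeneity: for $t>0$ and $K\in\mathcal K_N$ one has $I_1(tK)=t^{N-1}I_1(K)$ directly from the definition \eqref{rieszpotentialenergy} of the Riesz energy (push forward the competitor measure under the dilation and change variables in the kernel $|x-y|^{-(N-1)}$), hence ${\rm Cap}_1(tK)=t^{-(N-1)}{\rm Cap}_1(K)$, so $F={\rm Cap}_1$ is $q$-homogeneous in the sense required. Brunn--Minkowski inequality: this is exactly the content of Theorem~\ref{main}(ii), namely
\[
{\rm Cap}_1(K_0+K_1)^{\frac{1}{N-1}}\ge {\rm Cap}_1(K_0)^{\frac{1}{N-1}}+{\rm Cap}_1(K_1)^{\frac{1}{N-1}},
\]
obtained from \eqref{brunnminkowskiintro} by taking $\lambda=1/2$ and using the homogeneity just established (or directly, since \eqref{brunnminkowskiintro} for general $\lambda$ is equivalent to the additive form for a homogeneous functional). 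Theorem~\ref{bucfralamthm} then applies and yields that the ball is the unique minimizer of $M(K)/{\rm Cap}_1(K)^{1/(N-1)}$, hence, after the rescaling above, the unique minimizer of $I_1$ on $\mathcal K_{N,c}$ is the ball of measure $c$ — here one also notes that the volume is determined by the mean width for a ball, so ``the ball with mean width $c$'' and ``the ball of measure $c$'' designate the same object once the normalization constant is fixed.

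Finally, for the two-dimensional statement I would invoke the remark, recorded just before the corollary and attributed to \cite{bucfralam}, that when $N=2$ the mean width $M(K)$ equals, up to a fixed multiplicative constant, the perimeter $P(K)$; thus the class $\mathcal K_{2,c}$ coincides with $\{K\in\mathcal K_2:P(K)=2\pi r\}$ for the appropriate $r$, and the general assertion specializes to the claim that the disk of radius $r$ solves \eqref{coroeq}. I expect no serious obstacle here: the only points requiring care are the bookkeeping of homogeneity exponents and normalization constants, and the observation that a scale-invariant quotient minimization is equivalent to the constrained problem — none of which is deep, the substantive input being Theorem~\ref{main}(ii) itself.
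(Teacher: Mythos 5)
Your approach is exactly the paper's: the corollary is derived by applying Theorem~\ref{bucfralamthm} to $F={\rm Cap}_1$ with $q=N-1$, the Brunn--Minkowski hypothesis being supplied by Theorem~\ref{main}(ii) and the identification $\mathrm{Cap}_1=1/I_1$ by Definition~\eqref{rieszcapacity}, then reducing the constrained minimization of $I_1$ at fixed mean width to the scale-invariant quotient $M(K)/\mathrm{Cap}_1(K)^{1/(N-1)}$. The two-dimensional specialization via $M(K)\propto P(K)$ is also exactly how the paper handles \eqref{coroeq}.

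One slip to correct: you first (correctly) say ${\rm Cap}_1$ is $(N-1)$-homogeneous, but then compute $I_1(tK)=t^{N-1}I_1(K)$ and conclude ${\rm Cap}_1(tK)=t^{-(N-1)}{\rm Cap}_1(K)$, which has the wrong sign and contradicts your earlier statement. Pushing forward a competitor $\mu$ under $x\mapsto tx$ and changing variables in the kernel $|x-y|^{-(N-1)}$ gives $I_1(tK)=t^{-(N-1)}I_1(K)$, hence ${\rm Cap}_1(tK)=t^{N-1}{\rm Cap}_1(K)$; with the sign as you wrote it the quotient $M(K)/F^{1/q}(K)$ would not be scale-invariant and Theorem~\ref{bucfralamthm} would not apply. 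Since the rest of the argument uses the correct exponent, this is a local error rather than a structural one, but it should be fixed.
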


Motivated by Theorem \ref{main} and Corollary \ref{app} we conclude the paper with the following conjectures:

\begin{conj}
For any $N\ge 2$ and $\alpha\in (0,N)$, the $\a$-Riesz capacity ${\rm Cap}_{\a}(K)$ satisfies the following
Brunn-Minkowski inequality:\\ for any couple of convex bodies $K_0$ and $K_1$ 
and for any $\lambda\in[0,1]$ we have
\begin{equation}
{\rm Cap}_{\a}(\lambda K_1+(1-\lambda)K_0)^{\frac{1}{N-\a}}\ge 
\lambda{\rm Cap}_{\a}(K_1)^{\frac{1}{N-\a}}+(1-\lambda){\rm Cap}_{\a}(K_0)^{\frac{1}{N-\a}}.
\end{equation}
\end{conj}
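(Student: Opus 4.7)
The natural strategy is to mimic the proof of Theorem \ref{main}, replacing the Caffarelli--Silvestre extension by its general-$s$ counterpart. For $s = \a/2 \in (0, N/2)$, the fractional potential $u = v_K/I_\a(K)$ admits an extension $U(x,t)$ to $\R^N \times [0, \infty)$ satisfying the degenerate elliptic equation $\operatorname{div}(t^{1-2s} \nabla_{(x,t)} U) = 0$ with $U(x,0) = u(x)$ and $\lim_{t\to 0^+} t^{1-2s}\,\partial_t U(x,t) = c_{N,s}(-\Delta)^s u(x)$; this is the general form of Proposition \ref{caffarellisilvestre}. After even reflection across $\{t=0\}$, $U$ becomes the capacitary function of $K$ (embedded in the hyperplane $\{t=0\} \subset \R^{N+1}$) associated with the weighted operator $L_s v = \operatorname{div}(|t|^{1-2s}\nabla v)$.

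The first step would be to prove an analog of Step~1 in the proof of Theorem \ref{main}: namely that $|y|^{N-\a}U(y)$ converges, as $|y| \to \infty$ in $\R^{N+1}$, to a positive multiple of $\c_\a(K)$. This should follow by a direct computation from the integral representation $v_K(x) = \int |x-y|^{\a-N}\,d\mu_K(y)$ together with the known asymptotic behavior of $L_s$-harmonic functions at infinity. Combined with the identification of $U$ with a weighted capacitary function in $\R^{N+1}$, this would reduce the Brunn--Minkowski inequality for $\c_\a$ to a Brunn--Minkowski inequality for the weighted $L_s$-capacity of sets lying in the hyperplane $\{t=0\}$.

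The second, and crucial, step would be to establish for $L_s$ the analog of the Colesanti--Salani level-set convexity and of the pointwise comparison $\v_\lambda \le V_\lambda$ used in Step~2 of Theorem \ref{main}. Once these are in place, the argument of the paper — Minkowski interpolation of the superlevel sets $\{U_\lambda > s\}$ in $\R^{N+1}$, restriction to the hyperplane $\{t=0\}$, and identification of the asymptotic multiplicative constants via Step~1 — would carry over essentially verbatim, producing the desired inequality with the correct exponent $1/(N-\a)$.

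The main obstacle is precisely the extension of the Colesanti--Salani machinery to the degenerate operator $L_s$ when $s \ne 1/2$. Their argument rests on Gabriel's theorem and on delicate comparison principles for \emph{harmonic} functions in exterior domains; when $s = 1/2$ the weight $|t|^{1-2s}\equiv 1$ is trivial and their result can be invoked directly, which is exactly what happens in Theorem \ref{main}. For general $s$, one would either have to adapt the Colesanti--Salani proof to the Muckenhoupt-$A_2$ weighted setting of $L_s$, or else bypass the extension entirely and work directly with the Riesz potential, for instance by seeking a Borell--Brascamp--Lieb or Pr\'ekopa--Leindler type concavity inequality for the energy $I_\a$ under Minkowski interpolation of the admissible measures. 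Both routes appear genuinely hard, which is consistent with the statement being presented as a conjecture.
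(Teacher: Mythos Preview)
The statement you are addressing is labelled as a \emph{Conjecture} in the paper, and the paper offers no proof of it whatsoever; it is listed among the open problems in Section~\ref{s3}. Consequently there is nothing in the paper to compare your proposal against.

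Your proposal is not a proof but a strategy sketch, and you are explicit about this: you correctly identify that the argument of Theorem~\ref{main} relies on the Caffarelli--Silvestre extension with $s=1/2$, where the weight $|t|^{1-2s}$ is trivial and the Colesanti--Salani results for the ordinary Laplacian apply directly, and you correctly isolate the extension of those results to the degenerate operator $L_s = \operatorname{div}(|t|^{1-2s}\nabla\,\cdot\,)$ as the missing ingredient for general $\alpha$. This diagnosis is accurate and matches the reason the authors state the result as a conjecture rather than a theorem. Your two suggested routes (adapting Colesanti--Salani to the $A_2$-weighted setting, or a direct Borell--Brascamp--Lieb approach on the Riesz energy) are both reasonable directions, but as you yourself note, neither is carried out here, so the proposal remains a heuristic outline rather than a proof.
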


\begin{conj}
For any $N\ge 2$ and $\alpha\in (0,N)$, the ball of radius $r$ is the unique solution
of the problem
\begin{equation}\label{conjeq}
\min_{K\in\mathcal{K}_N, P(K)=N\omega_N r^{N-1}}I_{\alpha}(K).
\end{equation}
\end{conj}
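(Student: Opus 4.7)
The natural plan is to replay, for arbitrary $\alpha \in (0, N)$, the chain of implications that produced Corollary \ref{app} in the case $N = 2$, $\alpha = 1$: first establish a Brunn-Minkowski inequality for ${\rm Cap}_\alpha$ (which is the preceding conjecture of this paper), then feed it into Theorem \ref{bucfralamthm}, and finally pass from mean width to perimeter. Accordingly the argument splits into three steps of very unequal difficulty.

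\emph{Step 1 (Brunn-Minkowski for ${\rm Cap}_\alpha$).} This is the principal obstacle. The strategy of Theorem \ref{main}(ii) succeeds for $s = 1/2$ precisely because the Caffarelli-Silvestre extension of the half-Laplacian is the ordinary harmonic Dirichlet problem in $\R^{N+1}$, so Colesanti-Salani's Brunn-Minkowski inequality for the classical $2$-capacity \cite{colsal} applies verbatim to the extended problem. For $s = \alpha/2 \in (0, N/2)$ the Caffarelli-Silvestre extension yields instead a degenerate elliptic Dirichlet problem on $\R^{N+1}_+$ with weight $t^{1-2s}$. My plan is to establish a Brunn-Minkowski inequality for the capacity associated with this weighted Dirichlet energy, among convex bodies lying on $\{t = 0\}$, by adapting the quasi-concavity techniques of \cite{colsal,cuosal} to the weighted operator $\mathrm{div}(t^{1-2s}\nabla\,\cdot)$, and then concluding via an approximation argument in the spirit of Lemma \ref{lemmas2}.

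\emph{Step 2 (Reduction via Theorem \ref{bucfralamthm}).} Once Step 1 is in place, applying Theorem \ref{bucfralamthm} with $F = {\rm Cap}_\alpha$ and $q = N - \alpha$ yields that the ball uniquely minimizes the ratio $M(K)/{\rm Cap}_\alpha(K)^{1/(N-\alpha)}$, equivalently that the ball minimizes $I_\alpha(K)$ among convex bodies of prescribed mean width $M(K)$.

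\emph{Step 3 (From mean width to perimeter).} For $N = 2$ the mean width and the perimeter coincide up to the factor $\pi$, so the statement follows at once from Step 2. For $N \geq 3$ the two functionals are genuinely distinct and a further argument is needed. The plan is to revisit the proof of Theorem \ref{bucfralamthm} replacing the linearity of $M(\cdot)$ under Minkowski addition by the Brunn-Minkowski inequality for perimeter $P(K_0 + K_1)^{1/(N-1)} \geq P(K_0)^{1/(N-1)} + P(K_1)^{1/(N-1)}$. Concretely, at a putative minimizer $K$ of $P(K)/{\rm Cap}_\alpha(K)^{1/(N-\alpha)}$, one compares $K$ with $K + \varepsilon B$, extracts a first-order optimality condition from the joint Brunn-Minkowski concavity of $P$ and ${\rm Cap}_\alpha$, and then uses the rigidity of the equality cases in the relevant Brunn-Minkowski inequalities to force $K$ to be a ball. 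The chief difficulty remains Step 1, since the Borell-type machinery of \cite{colsal} has no immediate counterpart for the degenerate weighted operator arising when $s \neq 1/2$; a secondary non-trivial difficulty appears in Step 3 for $N \geq 3$, where one must handle two concave-under-Minkowski functionals with incompatible homogeneity degrees $N-1$ and $N-\alpha$ in place of the linearity of $M$ exploited in \cite{bucfralam}.
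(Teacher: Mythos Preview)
The statement you are attempting to prove is labeled as a \emph{conjecture} in the paper, and the paper offers no proof for it; it is explicitly listed among the open problems at the end of Section~\ref{s3}. There is therefore no paper proof to compare your proposal against.

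Your proposal is not a proof but a research outline, and you yourself identify its two genuine gaps. Step~1 is precisely the preceding conjecture of the paper (Brunn--Minkowski for ${\rm Cap}_\alpha$ with general $\alpha$), which is open; your plan to adapt the Colesanti--Salani quasi-concavity machinery to the degenerate operator $\mathrm{div}(t^{1-2s}\nabla\,\cdot)$ is a reasonable line of attack, but you provide no argument, and the degeneracy at $t=0$ is exactly where the comparison-principle and level-set-convexity arguments of \cite{colsal,cuosal} become delicate. Step~3 is also a genuine obstacle for $N\ge 3$: Theorem~\ref{bucfralamthm} exploits the \emph{linearity} of the mean width under Minkowski addition, and replacing it by the merely $(1/(N-1))$-concave perimeter functional destroys the variational argument of \cite{bucfralam}; your proposed first-variation comparison with $K+\varepsilon B$ does not obviously yield rigidity, since equality in the perimeter Brunn--Minkowski inequality with a ball holds for \emph{every} convex body $K$ at first order. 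In short, your outline correctly locates the difficulties but resolves neither of them, which is consistent with the paper's own assessment that the statement is open.
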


%

\section*{Acknowledgements}
The authors wish to thank G. Buttazzo and D. Bucur for useful discussions 
on the subject of this paper.

%


\begin{thebibliography}{20}
\bibitem{borell} {\sc C. Borell}: {\em Capacitary inequalities of the Brunn--Minkowski type}, Math. Ann.,
{\bf
263}
(1984), 179--184.

\bibitem{brock} {\sc F. Brock}: {\em Weighted Dirichlet-type inequalities for Steiner Symmetrization}, 
Calc. Var. Partial Differential Equations, {\bf 8} (1999), 15--25.

\bibitem{bucfralam} {\sc D. Bucur, I. Fragal\`a, J. Lamboley}: {\em Optimal convex shapes for concave functionals},
ESAIM Control Optim. Calc. Var., {\bf 18} (2012), 693--711.

\bibitem{burchard} {\sc A. Burchard}: {\em Steiner symmetrization is continuous in $W^{1,p}$}, Geom. Funct.
Anal., {\bf 7} (1997), 823--860.

\bibitem{cs} {\sc L.A. Caffarelli, J. Spruck}: {\em Convexity of Solutions to Some Classical Variational
Problems}, Comm. P.D.E., {\bf 7} (1982), 1337--1379.

\bibitem{cafjerlie} {\sc L.A. Caffarelli,  D. Jerison, E.H. Lieb}: {\em On the Case of Equality in the
Brunn--Minkowski Inequality for Capacity}, Adv. Math., {\bf 117}  (1996), 193--207.

\bibitem{cafsil} {\sc L.A. Caffarelli, L. Silvestre}: {\em An extension problem related to
the fractional Laplacian}, Comm. Partial Differential Equations, {\bf 32} (2007), 1245--1260. 

\bibitem{colcuo} {\sc A. Colesanti, P. Cuoghi}:  
{\em The BrunnÃ¢â‚¬â€œMinkowski Inequality for the n-dimensional Logarithmic Capacity of Convex Bodies},  
Potential Anal., {\bf 22} (2005), 289--304.


\bibitem{colsal} {\sc A. Colesanti, P. Salani}: {\em The Brunn-Minkowski inequality for p-capacity of convex
bodies},
Math. Ann., {\bf 327} (2003), 459--479.

\bibitem{Cs} {\sc A. Colesanti, P. Salani} {\em Quasi-concave Envelope of a Function and Convexity of Level
Sets of Solutions to Elliptic Equations}, Math. Nach., {\bf 258} (2003), 3--15.


\bibitem{cuosal} {\sc P. Cuoghi, P. Salani}: {\em Convexity of level sets for solutions to nonlinear elliptic
problems in convex rings}, Electr. J. Differential Equations, {\bf 124} (2006), 1--12.

\bibitem{dinpalval} {\sc E. Di Nezza, G. Palatucci, E. Valdinoci} {\em HitchhikerÃ¢â‚¬â„¢s guide
to the fractional Sobolev spaces}, Bull. Sci. Math., {\bf  136} (2012), 521--573. 


\bibitem{eg} {\sc L.C. Evans, R.F. Gariepy}: {\em Measure theory and fine properties of functions}, 
Studies in Advanced Mathematics, CRC Press, Boca Raton, FL, 1992.

\bibitem{gardner} {\sc R. Gardner}: {\em The Brunn-Minkowski inequality}, Bull. Amer. Math. Soc., {\bf 353}
 (2002), 355--405.

\bibitem{golnovruf} {\sc M. Goldman, M. Novaga, B. Ruffini}: {\em Existence and stability for a non-local
isoperimetric model of charged liquid drops}, Preprint (2013), available at http://cvgmt.sns.it/paper/2267/

\bibitem{k} {\sc B. Kawohl}: {\em Rearrangements and Convexity of Level Sets in P.D.E.}, Lecture Notes in
Mathematics, 1150, Springer, Berlin, 1985.


\bibitem{K} {\sc N. Korevaar}, {\em Convexity of Level Sets for Solutions to Elliptic Ring Problems}, 
Comm. Partial Differential Equations, {\bf 15} (1990), 541--556.


\bibitem{landkof} {\sc N.S. Landkof}: {\em Foundations of Modern Potential Theory}, 
Springer-Verlag, Heidelberg 1972.

\bibitem{ll} {\sc E.H. Lieb, M. Loss}: {\em Analysis}, {Graduate Studies in Mathematics, AMS}, 2000. 

\bibitem{sch} {\sc R. Schneider}: {\em Convex bodies: the Brunn-Minkowski theory}, Cambridge Univ. Press, 1993.
\end{thebibliography}
\end{document}